\def\p{\mathfrak{p}}
\newcommand{\C}{\mathbb{C}}
\newcommand{\R}{\mathbb{R}}
\newcommand{\Z}{\mathbb{Z}} 
\newcommand{\Q}{\mathbb{Q}}
\renewcommand{\to}{\longrightarrow}
\newtheorem{Thm}{Theorem}[section]		
\newtheorem{Lemma}[Thm]{Lemma}
\newtheorem*{thm}{Theorem}	
\theoremstyle{definition}
\newtheorem{Definition}[Thm]{Definition}
\theoremstyle{remark}
\newtheorem*{rmk}{Remark}
\newtheorem*{ex}{Example}
\newtheorem{ind}[]{{\rm\it Indice}}
\newcommand{\bfunc}[1]{\operatorname{\mathtt{#1}}}
\newcommand{\Gen}{\operatorname{Gen}}
\title{The Riemann Hypothesis for period polynomials of Hilbert modular forms}
\author[Babei]{Angelica Babei}
\author[Rolen]{Larry Rolen}
\author[Wagner]{Ian Wagner}
\begin{document}
\numberwithin{equation}{section}

\begin{abstract}
There have been a number of recent works on the theory of period polynomials and their zeros. In particular, zeros of period polynomials have been shown to satisfy a ``Riemann Hypothesis'' in both classical settings and for cohomological versions extending the classical setting to the case of higher derivatives of $L$-functions. There thus appears to be a general phenomenon behind these phenomena. In this paper, we explore further generalizations by defining a natural analogue for Hilbert modular forms. We then prove that similar Riemann Hypotheses hold in this situation as well.
\end{abstract}

\maketitle

\section{Introduction and statement of results}

One of the most useful ideas for the study of spaces of modular forms is to relate these finite dimensional spaces to spaces spanned by polynomials.  In particular, the theory of modular symbols and the Eichler-Shimura isomorphism provide a canonical cohomological theory for modular forms based on special polynomials (see \cite{PP} for an excellent summary of this theory).  To be more precise, given any cuspform $f \in S_{k}$ of weight $k$ and modular on $SL_{2}(\Z)$ the {\textit{period polynomial}} is the modular integral 
\begin{equation} \label{PP}
r_{f}(X) := \int_{0}^{i \infty} f(\tau) (\tau -X)^{k-2} d \tau.
\end{equation}
These polynomials encode deep arithmetic information.  Specifically, their coefficients are essentially the {\textit{critical $L$-values}} of $f$: 
\begin{equation} \label{l}
r_{f}(X) = - \frac{(k-2)!}{(2 \pi i)^{k-1}} \sum_{m=0}^{k-2} \frac{(2 \pi i X)^{m}}{m!} L(f, k - m -1)=
\sum_{m=0}^{k-2}i^{m+k-1} \binom{k-2}{m}  X^{m} \Lambda(f, k-m-1),
\end{equation}
where the completed $L$-function is given by
\[
\Lambda(f,s):= (2 \pi)^{-s} \Gamma(s) L(f, s).
\]
 The completed $L$-function has an analytic continuation to $\C$ and satisfies the functional equation $\Lambda(f, s) = \epsilon(f) \Lambda(f, k-s)$ with $\epsilon(f) = \pm 1$.  From this functional equation one can see that the critical values are the integer values inside the critical strip, namely $s=1, 2, \dots, k-1$.  Deep conjectures such as the Birch and Swinnerton-Dyer conjecture and the Bloch-Kato  conjecture in the case of central $L$-values and Beilinson conjecture for non-central $L$-values imply that these values contain important arithmetic information (see, e.g., \cite{BK, KZ}).  Manin also showed that the $L$-values satisfy certain rationality conditions. 

\begin{thm}[Manin \cite{Manin}]
Let $f$ be a normalized Hecke eigenform in $S_{k}$ with rational Fourier coefficients.  Then there exist $\omega_{\pm}(f) \in \R$ such that
\begin{equation*}
\Lambda(f, s)/\omega_{+}(f), \ \Lambda(f, w)/\omega_{-}(f) \in \Q
\end{equation*}
for all $s, w$ with $1 \leq s, w \leq k-1$ and $s$ even, $w$ odd.
\end{thm}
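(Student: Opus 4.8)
The plan is to reinterpret the period polynomial cohomologically and to exploit the rational structure furnished by the Eichler--Shimura isomorphism. First I would extend the modular integral \eqref{PP} to a parabolic $1$-cocycle: for $\gamma \in \SL_2(\Z)$ set
\[
\psi_f(\gamma)(X) := \int_{\gamma^{-1}(i\infty)}^{i\infty} f(\tau)\,(\tau - X)^{k-2}\, d\tau ,
\]
whose value on $S = \left(\begin{smallmatrix} 0 & -1 \\ 1 & 0 \end{smallmatrix}\right)$ recovers $r_f$. The assignment $f \mapsto [\psi_f]$, together with its complex conjugate, realizes the Eichler--Shimura isomorphism $S_k \oplus \overline{S_k} \xrightarrow{\sim} H^1_{\mathrm{par}}(\SL_2(\Z), V_{k-2}(\C))$, where $V_{k-2} = \C[X]_{\le k-2}$. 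The feature I would rely on is that $V_{k-2}$ carries the evident $\Q$-structure $V_{k-2}(\Q) = \Q[X]_{\le k-2}$, so that $H^1_{\mathrm{par}}(\SL_2(\Z), V_{k-2}(\Q))$ is a $\Q$-form of the target.

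Next I would introduce the involution $\iota$ coming from $\left(\begin{smallmatrix} -1 & 0 \\ 0 & 1 \end{smallmatrix}\right)$, which acts by $X \mapsto -X$ and splits the cohomology into $\pm 1$-eigenspaces $H^{+} \oplus H^{-}$, each defined over $\Q$; on the polynomial side this is exactly the decomposition $r_f = r_f^{+} + r_f^{-}$ into even and odd parts. Since $f \in S_k$ forces $k$ even, the substitution $s = k-1-m$ shows that the even powers of $X$ in \eqref{l} collect the odd critical values while the odd powers collect the even critical values, and tracking the factor $i^{m+k-1}$ shows that $r_f^{-}$ has coefficients that are rational multiples of the real numbers $\Lambda(f,s)$ with $s$ even, whereas $r_f^{+}$ has coefficients that are rational multiples of $i\,\Lambda(f,w)$ with $w$ odd (the reality of these critical values being immediate from the rationality of the Fourier coefficients of $f$).

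I would then use the Hecke operators $T_n$, which act on $H^1_{\mathrm{par}}(\SL_2(\Z), V_{k-2}(\Q))$ preserving the $\Q$-structure and commuting with $\iota$. Because $f$ is a normalized eigenform with rational Fourier coefficients, its system of eigenvalues $\{a_n\} \subset \Q$ is Galois-stable, so the associated Hecke eigenspace is defined over $\Q$; it is two-dimensional, and intersecting with $H^{\pm}$ together with multiplicity one yields one-dimensional $\Q$-lines spanned by rational polynomials $P^{+} \in \Q[X]$ (even) and $P^{-} \in \Q[X]$ (odd). Consequently there are scalars with $r_f^{-} = \omega_{+}(f)\,P^{-}$ and $r_f^{+} = i\,\omega_{-}(f)\,P^{+}$, and since $r_f^{-}$ is real while $r_f^{+}$ is purely imaginary, both $\omega_{\pm}(f)$ lie in $\R$.

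Comparing coefficients through \eqref{l} then gives $\Lambda(f,s)/\omega_{+}(f) \in \Q$ for even $s$ and $\Lambda(f,w)/\omega_{-}(f) \in \Q$ for odd $w$, with $1 \le s, w \le k-1$, as claimed. The step I expect to be the main obstacle is establishing the rational structure precisely: one must show that the Hecke action on parabolic cohomology is defined over $\Q$ and Galois-equivariant, so that the eigenvalue system $\{a_n\}$ genuinely cuts out a $\Q$-rational two-dimensional eigenspace, and then that multiplicity one forces each of its $\iota$-eigencomponents to be a single rational line. Once this rational line is in hand, matching it against the explicitly real or purely imaginary coefficients of $r_f^{\pm}$ to deduce that $\omega_{\pm}(f)$ may be taken in $\R$ is the remaining delicate point.
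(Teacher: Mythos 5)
The paper does not prove this statement; it is quoted as a classical result with a citation to \cite{Manin}, so there is no internal proof to compare against. Your outline is essentially the standard Eichler--Shimura/modular-symbols argument that underlies Manin's own proof: a $\Q$-structure on $H^1_{\mathrm{par}}(\SL_2(\Z),\Q[X]_{\leq k-2})$ preserved by the Hecke operators and by the involution $X\mapsto -X$, level-one multiplicity one cutting out a rational line in each parity eigenspace, and the parity/reality bookkeeping in \eqref{l} (for $k$ even, the odd powers of $X$ carry real multiples of the even critical values and the even powers carry purely imaginary multiples of the odd ones) to conclude that the scalars $\omega_{\pm}(f)$ may be taken real. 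The sketch is correct, and the two points you flag as delicate---rationality and Galois-equivariance of the Hecke action on parabolic cohomology (equivalently, on the space of polynomials satisfying the period relations), and the one-dimensionality of each $\iota$-eigencomponent of the eigenspace---are precisely the inputs supplied by Manin and Shimura; neither presents an obstruction.
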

For more details about the general philosophy of the arithmetic of the periods $\omega_{\pm}(f)$ see \cite{KZ}.  

The functional equation endows the period polynomial with the relation
\[
r_{f}(X) = -i^{k} \epsilon(f) X^{\frac{k-2}{2}} r_{f} \left(- \frac{1}{X} \right).
\]
This ``self-inversive'' property shows that if $\rho$ is a zero of $r_{f}(X)$ then so is $-\frac{1}{\rho}$ and so the unit circle is a natural line of symmetry for the period polynomials just as the critical line is a natural line of symmetry for the completed $L$-function.  For this reason the stipulation that all roots of the period polynomials lie on the unit circle has been termed the {\textit{Riemann hypothesis for period polynomials}} (RHPP).  The first work on this subject is due to Conrey, Farmer, and Imamo\u{g}lu \cite{CFI}, who showed that the odd part of the period polynomial for any level $1$ Hecke eigenform, apart from five so-called ``trivial zeros", all lie on the unit circle.  Shortly thereafter, El-Guindy and Raji \cite{ER} showed that the full period polynomial for any level $1$ eigenform satisfies RHPP.  Recently, Jin, Ma, Ono, and Soundararajan \cite{JMOS} used a brilliant synthesis of analytic techniques to show that the RHPP is an even more general phenomenon.  Namely, they showed that the RHPP holds for any Hecke eigenform for any congruence subgroup $\Gamma_{0}(N)$.  Given the broad nature of these results, it is natural to ask if these are initial cases of a more general phenomenon.  In two recent papers, Diamantis and the second author \cite{DR1, DR2} have explored a generalization of the RHPP which takes into account a cohomological period polynomial attached to higher $L$-derivatives.  There they conjecture that a similar phenomenon always holds and prove some test cases of this conjecture.  Here we generalize the period polynomials in another aspect and find that the RHPP still holds true.

Specifically, we consider the generalization to any Hilbert modular eigenform of parallel weight on the full Hilbert modular group.  Some previous results have been given on the cohomology theory of Hilbert modular forms and their periods \cite{BDG, YH},  but to the best of the authors' knowledge no direct analogue of the period polynomials in this case has been written down in an explicit form in the literature (see also the second remark preceeding Theorem~\ref{M}).  In analogy with \eqref{PP} we propose 
\begin{equation} \label{HPP}
r_{f}(X) := \int_{0}^{i \infty} \cdots \int_{0}^{i \infty} f( \tau)(N(\tau) - X)^{k-2} d \tau,
\end{equation}
where $f(\tau) = f(\tau_{1}, \dots, \tau_{n})$ is a parallel weight $k$ Hilbert modular eigenform for a number field $K$ of degree $n$ on the full Hilbert modular group and $N(\tau) = \tau_{1} \cdots \tau_{n}$, $d \tau = d \tau_{1} \cdots d \tau_{n}$.  In further analogy with \eqref{l} we have
\begin{equation}
r_{f}(X) = (-1)^n (k-2)! \left(\frac{D_{K}}{(2 \pi i)} \right)^{k-1} \sum_{m=0}^{k-2} \frac{(-1)^{m(n+1)} \Gamma(k-m-1)^{n-1}}{m!} \left(\frac{(2 \pi i)^{n} X}{D_{K}} \right)^{m} L(f, k-m-1)
\end{equation}
or equivalently
\[
r_{f}(X) =  \sum_{m=0}^{k-2} (-1)^{m} i^{n(k-m-1)}\binom{k-2}{m} X^{m} \Lambda(f, k-m-1),
\]
where $D_{K}$ is the discriminant of $K$ and $L(f,s)$ and $\Lambda(f,s)$ are defined for Hilbert modular forms in equations \eqref{HL} and \eqref{HLC} respectively.
\begin{rmk}
The definition of period polynomial for a Hilbert modular form given in equation \eqref{HPP} naturally extends the elliptic modular form definition by encoding the critical $L$-values of $f$ as coefficients.  These polynomials however do not satisfy all of the period relations that the polynomials in equation \eqref{PP} satisfy.  There is another natural definition of an $n$-variable function that satisfies the corresponding period relations in the Hilbert case, but it is less clear what arithmetic information the coefficients contain in this case.
\end{rmk}   
\begin{rmk}
After writing this paper, the authors have learned that YoungJu Choie has also considered period polynomials for Hilbert modular forms in forthcoming work. 
\end{rmk}
Our main result is as follows.
\begin{Thm}[The Riemann hypothesis for period polynomials of Hilbert modular forms] \label{M}
Let $f$ be a parallel weight $k$ Hilbert modular eigenform of degree $n$ on the full Hilbert modular group.  Then all of the roots of $r_{f}(X)$ lie on the unit circle.

Moreover, as $k \to \infty$, the zeros of $r_{f}(X)$ become equidistributed on the unit circle.
\end{Thm}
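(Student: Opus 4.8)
The plan is to adapt the analytic strategy of Jin, Ma, Ono, and Soundararajan \cite{JMOS} to the Hilbert setting. Writing $X = e^{i\theta}$, I would first extract the natural symmetry of the coefficients: the functional equation relating $\Lambda(f,k-m-1)$ and $\Lambda(f,m+1)$ pairs the coefficient of $X^m$ with that of $X^{k-2-m}$, and combined with the known behaviour of the completed $L$-values at the critical integers this shows that, after multiplying $r_f(e^{i\theta})$ by the monomial $e^{-i(k-2)\theta/2}$ and an explicit unimodular constant, one obtains a function $g(\theta)$ that is real-valued (the precise constant, and a possible factor of $i$, depend on the parity of $n$ and on $\epsilon(f)$). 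Since $r_f$ has degree $k-2$, it then suffices to reduce Theorem~\ref{M} to showing that $g(\theta)$ has at least $k-2$ sign changes on a full period $[0,2\pi)$: each sign change yields a zero of $r_f$ on the unit circle, and this already accounts for every root.

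To locate these sign changes I would replace the $L$-value expansion by a rapidly convergent integral. Using the Fourier expansion $f(\tau) = \sum_{\nu \gg 0} a_\nu e^{2\pi i \Tr(\nu\tau)}$ and the super-exponential decay of $f$ towards the cusp, I would rewrite the defining integral \eqref{HPP} over the region where every $\Im\tau_j$ is bounded below, so that the leading behaviour of $g(\theta)$ is governed by the single Fourier term $\nu = \nu_0$ of minimal trace (normalized so that $a_{\nu_0} = 1$). Carrying out the $n$-fold integration of this term — where the factors $\Gamma(k-m-1)^{n-1}$ and the phases $i^{n(k-m-1)}$ enter — should produce an explicit main term that behaves, up to a smooth positive amplitude, like $\cos\!\big(\tfrac{k-2}{2}\theta + \phi\big)$, whose sign changes are nearly equally spaced and number exactly $k-2$. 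This computation of the main term is the Hilbert-specific core of the argument.

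The crux, and the step I expect to be the main obstacle, is to bound the remaining contributions and show that they cannot cancel any sign change of the main term. Here I would combine Hecke/Deligne-type bounds on the Hilbert modular Fourier coefficients $a_\nu$ with the rapid decay in $\Tr(\nu)$ supplied by the cutoff, together with Stirling estimates for the ratios $\Gamma(k-m-1)^{n-1}/\Gamma(\tfrac{k}{2}-1)^{n-1}$, which force the binomial-weighted coefficients to decay quickly as $m$ moves away from the centre $m = (k-2)/2$. Assembling these bounds, one aims to show that $|g(\theta) - (\text{main term})|$ is, uniformly in $\theta$, smaller than the oscillation of the main term between consecutive extrema, so that every sign change survives. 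The delicacy is that the $n$-fold product amplifies the error across all embeddings, and the absence of a full self-inversive structure (cf.\ the first remark) removes some of the rigidity available in the classical case; establishing that the central term strictly dominates, uniformly in $\theta$ and with explicit dependence on $n$, is where the real work lies.

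Finally, the equidistribution statement would follow with little extra effort: since the main term is essentially $\cos\!\big(\tfrac{k-2}{2}\theta + \phi\big)$ with a smooth amplitude, its zeros equidistribute on the circle as $k \to \infty$, and the error estimates of the previous step pin each genuine zero of $r_f$ within $o(1/k)$ of a main-term zero. A standard counting (Weyl) argument then upgrades the equidistribution of the main-term zeros to equidistribution of the full zero set of $r_f(X)$.
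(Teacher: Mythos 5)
Your proposal does not follow the paper's route, and it contains a concrete error at its core. The paper does not count sign changes of a real trigonometric function. Instead it writes $r_f(i^{n+2}X)$ in the form $X^mQ_f(X)+\epsilon(f)Q_f^*(X)$ with $m=\frac{k-2}{2}$, where $Q_f(X)=P_f(X)/\Lambda(f,2m+1)$ is a normalized polynomial in the critical completed $L$-values; it then invokes the Lal\'in--Smyth criterion (such a polynomial has all roots on the unit circle if and only if $Q_f$ has all roots in the closed unit disc) and proves $|Q_f(X)-X^m|<1$ on $|X|=1$ by Rouch\'e's theorem. That bound comes from comparing ratios of $L$-values to ratios of $\zeta$-values (Lemma~\ref{L}), Minkowski's bound on $D_K$, and a monotonicity analysis of the resulting quantity $T_n(m)$ in both $n$ and $m$; the finitely many triples $(n,D_K,k)$ left over are verified by explicit computation, which for the two cubic fields requires constructing the spaces of forms from Eisenstein series and Hecke operators.

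The gap in your argument is the identification of the main term. You assert that the Stirling ratios force the binomial-weighted coefficients to decay as $m$ moves away from the centre $m=\frac{k-2}{2}$, and you build your main term from the central coefficient (equivalently, from the minimal-trace Fourier term). This is backwards. The coefficient of $X^m$ in $r_f$ has magnitude $\binom{k-2}{m}\,|\Lambda(f,k-m-1)|$, and the factor $\Gamma(k-m-1)^n$ inside $\Lambda(f,k-m-1)$ grows from $\Gamma(k/2)^n$ at the centre to $\Gamma(k-1)^n$ at $m=0$, a gain of order $\left((k/2)^{k/2}\right)^n$, which overwhelms the central binomial coefficient of size about $2^{k}$. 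Hence the two \emph{extreme} coefficients dominate, the correct model on $|X|=1$ is (up to constants) $X^{k-2}\pm 1$, and the error analysis must bound the middle terms against the edge terms --- which is precisely what the paper's estimate $|Q_f(X)-X^m|\le T_n(m)$ does. Your final expression $\cos\!\left(\frac{k-2}{2}\theta+\phi\right)$ happens to agree with what the edge-dominant model produces after normalization (this is essentially the Conrey--Farmer--Imamo\u{g}lu picture), but the reasoning you give for why the remainder is small would not close. Two further omissions: any domination argument of this type only succeeds for $k$, $n$, or $D_K$ sufficiently large (the paper needs $m\ge 8$ for $n=2$, $D_K=5$), so a nontrivial list of small cases must be settled separately and, in practice, computationally; and since $r_f$ here satisfies only the single self-inversive relation coming from the functional equation (not the full set of period relations), the ``natural symmetry'' you invoke is exactly that one relation and no more.
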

\begin{rmk}
The above result seems provable as well for congruence subgroups. In particular, we included the case when the Atkin-Lehner eigenvalue $\epsilon(f)=-1$ and  we would follow the same argument up until Equation (\ref{inequality}). From there, the factor in the conductor corresponding to the level would in fact lower the bounds on the weights of forms of larger level that we need to examine. However, at the time of this project, the existing infrastructure for Hilbert modular forms over cubic fields did not cover forms of larger level.
\end{rmk}

The paper is organized as follows. In Section~\ref{PrelimSection} we discuss the basic definitions and results on polynomial roots, computer calculations, and analytic number theory required for the proofs. 
The proofs of the main results are then given in Section~\ref{Proofs}. Finally, we conclude with a discussion of examples  and ideas for future directions in Section~\ref{FinalSection}.

\section*{Acknowledgements}
The authors thank Claudia Alfes, YoungJu Choie, David Farmer, Ahmad El-Guindy, Ken Ono, Vicen\unichar{539}iu Pa\unichar{537}ol,  Wissam Raji, and Markus Schwagenscheidt for helpful comments.

\section{preliminaries}\label{PrelimSection}

\subsection{Basic definitions}
In this subsection we will review the definitions of parallel integer weight Hilbert modular forms and their $L$-functions. For more details on the general theory, we refer the reader to the survey of Bruinier in \cite{Bruinier123}. Let $K$ be a number field of degree $n$ above $\Q$. Basic Galois theory implies that  there exists $n$ different embeddings $K\hookrightarrow\C$, which we will denote by $a\mapsto a^{(j)}$ for $1\leq j\leq n$.
 We will assume from here forward that $K$ is totally real.  Define the \textit{norm} of an element by $N(x) := \prod_{j=1}^{n} x^{(j)}$ and the \textit{trace} of an element by $Tr(x) := \sum_{j=1}^{n} x^{(j)}$.  Let $\mathfrak{d}_{K}$ be the \textit{different} of $K$ so that $N(\mathfrak{d}_{K}) =:D_{K}$ is the discriminant of $K$. The general linear group $GL_{2}(K)$ embeds into $GL_{2}(\R)^{n}$ via the real embeddings of $K$.  Let $GL_2^+(K) \colonequals \{ \gamma \in GL_2(K): \det \gamma \gg 0\}$ be the subgroup of matrices with totally positive determinant.  It acts on $\mathbb{H}^{n}$ via fractional linear transformations,
 \[
\begin{pmatrix} a & b \\ c & d \end{pmatrix} \tau := \left( \frac{a\tau_{1} + b}{c\tau_{1} + d}, \frac{a^{(2)} \tau_{2} + b^{(2)}}{c^{(2)} \tau_{2} + d^{(2)}}, \dots, \frac{a^{(n)} \tau_{n} + b^{(n)}}{c^{(n)} \tau_{n} + d^{(n)}} \right),
\]
where $\tau=(\tau_{1}, \dots, \tau_{n}) \in \mathbb{H}^{n}$.  If $\mathfrak{a}$ is a fractional ideal of $K$, we define the \textit{Hilbert modular group} corresponding to $\mathfrak{a}$ as
\[
\Gamma(\mathcal{O}_{K} \oplus \mathfrak{a}) := \left\{ \begin{pmatrix} a & b \\ c & d \end{pmatrix} \in GL_{2}^+(K): a, d \in \mathcal{O}_{K}, \  b \in \mathfrak{a}^{-1}, \ c \in \mathfrak{a} \right\}.
\]
Furthermore, define $\Gamma_{K} := \Gamma( \mathcal{O}_{K} \oplus \mathcal{O}_{K}) = GL_{2}^{+}(\mathcal{O}_{K})$, which we just call the {\it full Hilbert modular group}.  For $\gamma = \begin{pmatrix} a & b \\ c & d \end{pmatrix} \in GL_{2}^+(K) \hookrightarrow GL_{2}(\R)^{n}$ and $z \in \mathbb{H}^{n}$ define the automorphic factor
\[
J(\gamma, \tau) := \det(\gamma)^{-1/2} N(c\tau + d) = \prod_{j=1}^{n} \det(\gamma_{j})^{-1/2} \left( c^{(j)} \tau_{j} + d^{(j)} \right),
\]
where $\gamma_{j} = \begin{pmatrix} a^{(j)} & b^{(j)} \\ c^{(j)} & d^{(j)} \end{pmatrix}$.  
\begin{Definition} \label{Hilbert}
A holomorphic function $f\colon \mathbb{H}^{n} \to \C$ is called a holomorphic \textit{Hilbert modular form} of parallel integer weight $k= (k,k, \dots, k) \in \Z^{n}$ for $\Gamma_{K}$ if for all $\gamma = \begin{pmatrix} a & b \\ c & d \end{pmatrix} \in \Gamma_{K}$,
\[
f(\gamma \tau) = J(\gamma, \tau)^{k} f(\tau) = \det(\gamma)^{-k/2} N(c\tau + d)^{k} f(\tau).
\]

\end{Definition} 
We denote the space of holomorphic Hilbert modular forms of weight $k$ on $\Gamma_{K}$ by $M_{k}(\Gamma_{K})$.  If $\mathcal{O}_{K}$ has a unit of negative norm then $M_{k}(\Gamma_{K}) = \{0\}$ for $k$ odd, so we will suppose that $k$ is even.  If $f \in M_{k}(\Gamma_{K})$ vanishes at the cusps we call it a cusp form and denote this space by $S_{k}(\Gamma_{K})$.  Each $f \in M_{k}(\Gamma_{K})$ has a Fourier expansion of the form
\begin{equation}
\label{expansion1}
f(\tau) =a(0) + \sum_{\substack{\nu \in \mathfrak{d}_{K}^{-1} \\ \nu \gg 0}} a(\nu) e^{2 \pi i Tr( \nu \tau)},
\end{equation}
where $Tr(\nu \tau) = \sum_{j=1}^{n} \nu^{(j)} \tau_{j}$ and $\nu \gg 0$ means that $\nu$ is totally positive. Since $\nu \in \mathfrak{d}_{K}^{-1}$, each ideal $\mathfrak{n}=\nu \mathfrak{d}_{K}$ is integral. When the forms have parallel even weight, $a(\nu)=a(\nu \eta)$ for any totally positive unit $\eta \in \mathcal{O}_K^\times$ and we may rewrite \eqref{expansion1} as 

\[
f(\tau) =a(0) + \sum_{\substack{\mathfrak{n}  \subset \mathcal{O}_K  \\ \mathfrak{n} \ne 0}} a(\mathfrak{n}) \sum_{\substack{\eta \in \mathcal{O}_{K}^{\times} \\ \eta\gg 0}}e^{2 \pi i Tr( \nu \eta \tau)},
\]
and we may identify each modular form by the coefficients $a(\mathfrak{n})$.

Therefore, $f \in S_{k}(\Gamma_{K})$ has an associated $L$-function given as a Dirichlet series by
\begin{equation} \label{HL}
L(f,s) := \sum_{\substack{ \nu \in \mathfrak{d}_{k}^{-1}/\mathcal{O}_{K}^{\times} \\ \nu \gg 0}} a(v) N(v)^{-s} = \sum_{\substack{ \mathfrak{n} \in \mathcal{O}_{K} \\ \mathfrak{n} \neq 0}} a(\mathfrak{n}) N(\mathfrak{n})^{-s}.
\end{equation}
The completed $L$-function is defined by
\begin{equation} \label{HLC}
\Lambda(f,s) := D_{K}^{s} (2 \pi)^{-ns} \Gamma(s)^{n} L(f,s)
\end{equation}
and also has the $n$-fold integral representation
\[
\Lambda(f,s) = \int_{0}^{\infty} \cdots \int_{0}^{\infty} f(iy) N(y)^{s-1} dy.
\]
This completed $L$-function satisfies the  functional equation 
\begin{equation}
\label{func}
\Lambda(f,s) = \epsilon(f) \Lambda(f, k-s),
\end{equation}
where $\epsilon(f) \in \{ \pm 1\}$.  

\subsection{Computing period polynomials}
The proofs of our main results consist of two parts. Firstly, analytic techniques are used to guarantee that the theorem eventually holds true in different aspects. Then, detailed computer calculations are used to verify the small cases. As these calculations are intensive and use newly developed code, we provide a detailed description of this procedure. 
We carry out  our computations in Magma \cite{magma}. The main ingredient for our computations consist of obtaining eigenbases for subspaces of cusp forms and creating $L$-functions for these forms. The reader can find details about constructions of $L$-functions in Magma in the handbook available online. We summarize the construction for convenience. Every $L$-function in Magma is created using the command $\bfunc{LSeries}$, which relies on  the  functional equation (\ref{func}) and the factors in the definition of the completed $L$-function such as the conductor (in our case given by the discriminant), the weight, the $\Gamma$-factor, as well as finitely many  coefficients. The coefficients can either be given at each integer, or at each prime, and then generate the Euler product.  The number of coefficients required to find $L$-values up to a given precision grows with the weight and the degree of the field.

To create $L$-series of Hilbert modular forms over quadratic fields, we use the environment for Hilbert modular forms in Magma, and a slight modification of the command $\bfunc{LSeries}$. In particular, the coefficients of the $L$-function usually come embedded in an extension of $\Q$, and  $\bfunc{LSeries}$  only computes their first complex embedding. We modify the function to allow for the other complex embeddings as well.

In the case of Hilbert modular forms over cubic fields, one cannot get all the necessary cusp forms in the pre-existing environment in Magma. Instead, we use the package \cite{hmf} which implements Fourier expansions of Hilbert modular forms. In our construction of $L$-series, we input  the Fourier coefficients at each prime, which generate the Euler product.  We describe the algorithm for finding bases of Hilbert modular forms over cubic fields in Section \ref{cubic}.

\subsection{Some basic results on $L$-functions and in the theory of self-inversive polynomials}

Our proof  requires some preliminary results on Hilbert modular $L$-functions and is a generalization of the method in \cite{JMOS}.  The completed $L$ function $\Lambda(f,s)$ extends to an entire function of order one.  Its zeros are predicted to lie on the line ${\rm{Re}}(s) =\frac{k}{2}$ , but are known to lie in the strip $\left| {\rm{Re}}(s) - \frac{k}{2} \right| < \frac{1}{2}$.  We then require the famous Hadamard factorization:
\begin{equation*}
\Lambda(f,s) = e^{A+Bs} \prod_{\rho} \left( 1 - \frac{s}{\rho} \right)e^{\frac{s}{\rho}},
\end{equation*}
where the product is over all the zeros of $\Lambda(f,s)$.  Note that if $\rho$ is a zero, then so are $\bar{\rho}$ and $k-\rho$.  Using the fact that $\Lambda(f,s)$ is real-valued on the real line and the functional equation, we obtain
\begin{equation*}
B=-\sum_{\rho} {\rm{Re}} \left(\frac{1}{\rho} \right) = - \sum_{\rho} \frac{{\rm{Re}}(\rho)}{|\rho|^2}.
\end{equation*}
Using this we have that
\begin{equation} \label{z}
\Lambda(f,s) = e^{A} \prod_{\rho \in \R} \left( 1 - \frac{s}{\rho} \right) \prod_{{\rm{Im}}(\rho)>0} \left| 1 - \frac{s}{\rho} \right|^{2}
\end{equation}
for real $s$.
This is the main ingredient for the following key result. 
\begin{Lemma} \label{ineq}
The function $\Lambda(f,s)$ is monotonically increasing for $s \geq \frac{k}{2} + \frac{1}{2}$.  Furthermore,
\begin{equation*}
0 \leq \Lambda \left(f, \frac{k}{2} \right) \leq \Lambda \left(f, \frac{k}{2} +1 \right) \leq \Lambda \left(f, \frac{k}{2} + 2 \right) \leq \dots.
\end{equation*}
If $\epsilon(f) =-1$, then $\Lambda \left(f, \frac{k}{2} \right) =0$ and 
\begin{equation*}
0 \leq \Lambda \left( f, \frac{k}{2} +1 \right) \leq \frac{1}{2} \Lambda \left(f, \frac{k}{2} + 2 \right) \leq \frac{1}{3} \Lambda \left(f, \frac{k}{2} +3 \right) \leq \dots.
\end{equation*}
\end{Lemma}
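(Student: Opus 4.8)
The plan is to read all four assertions off the Hadamard product \eqref{z} restricted to the real axis, after first fixing the sign of $\Lambda(f,s)$ to the right of the critical strip. As $s\to+\infty$ the Dirichlet series $L(f,s)$ is dominated by its leading coefficient, which is positive, while $\Gamma(s)^{n}$ and $D_{K}^{s}(2\pi)^{-ns}$ are positive; hence $\Lambda(f,s)>0$ for large real $s$. Since the known zero-free region $|\mathrm{Re}(s)-\frac{k}{2}|<\frac12$ contains no real zero with $s\ge\frac{k}{2}+\frac12$, continuity then forces $\Lambda(f,s)>0$ on the whole ray $[\frac{k}{2}+\frac12,\infty)$. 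This reduces the monotonicity claim to showing $(\log\Lambda(f,s))'>0$ there.

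For that I would differentiate $\log|\Lambda(f,s)|$ termwise. A real zero $\rho$ contributes $\frac{d}{ds}\log|1-\frac{s}{\rho}|=\frac{1}{s-\rho}$, and a zero $\rho=\beta+i\gamma$ with $\gamma>0$ contributes $\frac{d}{ds}\log|1-\frac{s}{\rho}|^{2}=\frac{2(s-\beta)}{(s-\beta)^{2}+\gamma^{2}}$. Every zero satisfies $\mathrm{Re}(\rho)<\frac{k}{2}+\frac12\le s$, so each summand is positive; therefore $\Lambda'(f,s)/\Lambda(f,s)>0$ and, $\Lambda(f,s)$ being positive, $\Lambda(f,s)$ is strictly increasing for $s\ge\frac{k}{2}+\frac12$. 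In particular the tails $\Lambda(f,\frac{k}{2}+1)\le\Lambda(f,\frac{k}{2}+2)\le\cdots$ of both displayed chains are immediate.

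It remains to treat the two chains at the central point. Grouping real zeros into functional-equation pairs $\{\rho,k-\rho\}$, a direct computation gives for $s=\frac{k}{2}+t$ and $\rho=\frac{k}{2}-u$ with $0<u<\frac12$ the pair factor $\frac{t^{2}-u^{2}}{(k/2)^{2}-u^{2}}$, whose modulus grows from $t=0$ to $t=1$ because $u^{2}<\frac12$; the complex factors $|1-\frac{s}{\rho}|^{2}$ grow as well. Comparing $\Lambda(f,\frac{k}{2})$ with $\Lambda(f,\frac{k}{2}+1)$ factor by factor thus gives $|\Lambda(f,\frac{k}{2})|\le\Lambda(f,\frac{k}{2}+1)$, so once $\Lambda(f,\frac{k}{2})\ge0$ is known the step $\Lambda(f,\frac{k}{2})\le\Lambda(f,\frac{k}{2}+1)$ follows and the $\epsilon(f)=+1$ chain is complete. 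When $\epsilon(f)=-1$ the functional equation \eqref{func} forces a zero at $s=\frac{k}{2}$, whence $\Lambda(f,\frac{k}{2})=0$; factoring this zero out of \eqref{z} turns the consecutive ratio $\Lambda(f,\frac{k}{2}+j+1)/\Lambda(f,\frac{k}{2}+j)$ into $\frac{j+1}{j}$ times a product of terms each $\ge1$, which is precisely the assertion that $\frac1j\Lambda(f,\frac{k}{2}+j)$ is nondecreasing and yields the harmonic weights in the second chain.

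The one substantive input is the nonnegativity $\Lambda(f,\frac{k}{2})\ge0$ when $\epsilon(f)=+1$. The product \eqref{z} only shows that this sign equals $(-1)^{p}$, where $p$ counts the off-centre real zero pairs, and neither \eqref{func} nor the Hadamard product pins down this parity — the statement is equivalent to the nonnegativity of the central value $L(f,\frac{k}{2})$. This is the part I expect to be the real obstacle, and I would settle it not by further manipulation of the product but by invoking the known nonnegativity of central critical values for $\mathrm{GL}_{2}$ Hilbert modular $L$-functions.
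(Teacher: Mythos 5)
Your argument is correct and is essentially the proof the paper intends: the paper simply defers to Lemma 2.1 of \cite{JMOS}, whose argument is exactly this factor-by-factor analysis of the Hadamard product \eqref{z} on the real axis, and you have reconstructed it faithfully, including the comparison of pair factors across the strip and the extraction of the central zero when $\epsilon(f)=-1$. You also correctly isolated the one external input, the nonnegativity of the central value, which the paper likewise supplies by citing Waldspurger \cite{W}.
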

\begin{proof}
The proof follows exactly mutatis mutandis from Lemma 2.1 in \cite{JMOS}.
\end{proof}
We also require the following estimate.
\begin{Lemma} \label{L}
If $0<a<b$ and $f$ is a parallel weight $k$ newform of degree $n$, then we have
\begin{equation*} 
\frac{L \left( f, \frac{k+1}{2} +a \right)}{L \left(f, \frac{k+1}{2} +b \right)} \leq \frac{\zeta(1+a)^{2n}}{\zeta(1+b)^{2n}}.
\end{equation*}
\end{Lemma}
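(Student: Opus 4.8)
The plan is to prove the inequality prime‑by‑prime through the Euler product of $L(f,s)$, using the Dedekind zeta function $\zeta_K$ of $K$ as an intermediate object. Since $f$ is a newform on the full Hilbert modular group, Hecke theory gives the degree‑two Euler product at every finite prime,
\[
L(f,s) = \prod_{\mathfrak{p}} \left(1 - a(\mathfrak{p}) N(\mathfrak{p})^{-s} + N(\mathfrak{p})^{k-1-2s}\right)^{-1}.
\]
Factoring the local polynomial as $(1-\alpha_{\mathfrak{p}}N(\mathfrak{p})^{-s})(1-\beta_{\mathfrak{p}}N(\mathfrak{p})^{-s})$, the Ramanujan--Petersson bound for Hilbert modular forms (Blasius, via Deligne) gives $|\alpha_{\mathfrak{p}}| = |\beta_{\mathfrak{p}}| = N(\mathfrak{p})^{(k-1)/2}$. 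Writing $u = N(\mathfrak{p})^{-(s-(k-1)/2)}$ and $\lambda_{\mathfrak{p}} := a(\mathfrak{p})N(\mathfrak{p})^{-(k-1)/2}\in[-2,2]$, the local factor becomes $(1-\lambda_{\mathfrak{p}}u+u^2)^{-1}$, which is positive for real $0<u<1$ since $1-\lambda_{\mathfrak{p}}u+u^2\geq (1-u)^2$. At $s=\tfrac{k+1}{2}+a$ one has $u=N(\mathfrak{p})^{-1-a}$, so for $a,b>0$ both $L$-values are positive and given by absolutely convergent products, and it suffices to prove, term by term, the chain
\[
\frac{L\!\left(f,\tfrac{k+1}{2}+a\right)}{L\!\left(f,\tfrac{k+1}{2}+b\right)} \leq \frac{\zeta_K(1+a)^2}{\zeta_K(1+b)^2} \leq \frac{\zeta(1+a)^{2n}}{\zeta(1+b)^{2n}}.
\]

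For the first inequality I would fix $\mathfrak{p}$ and set $u_a = N(\mathfrak{p})^{-1-a} > u_b = N(\mathfrak{p})^{-1-b}$. The local contribution to the left ratio is $h(\lambda_{\mathfrak{p}})$, where $h(c):=\dfrac{1-c\,u_b+u_b^2}{1-c\,u_a+u_a^2}$. A direct computation gives $h'(c)=(u_a-u_b)(1-u_au_b)/(1-c\,u_a+u_a^2)^2>0$, so $h$ is increasing on $[-2,2]$ and $h(\lambda_{\mathfrak{p}})\leq h(2)=(1-u_b)^2/(1-u_a)^2$; the latter is precisely the local factor at $\mathfrak{p}$ of $\zeta_K(1+a)^2/\zeta_K(1+b)^2$. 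Multiplying over all $\mathfrak{p}$ yields the first inequality. Here the only role of Ramanujan--Petersson is to supply $\lambda_{\mathfrak{p}}\leq 2$, so that monotonicity can be applied at the endpoint $c=2$.

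For the second inequality I would group the primes $\mathfrak{p}$ of $K$ according to the rational prime $p$ below them. If $p$ has residue degrees $f_1,\dots,f_g$ and ramification indices $e_1,\dots,e_g$ with $\sum_i e_if_i=n$, then with $q=p^{-(1+a)}>r=p^{-(1+b)}$ the local ratio for $\zeta_K$ is $\prod_i (1-r^{f_i})/(1-q^{f_i})$, while that for $\zeta^{n}$ is $\big((1-r)/(1-q)\big)^{n}=\prod_i\big((1-r)/(1-q)\big)^{e_if_i}$. Since $r<q$ forces $(1-r)/(1-q)>1$ and $e_i\geq 1$, it suffices to prove $\tfrac{1-r^{f}}{1-q^{f}}\leq\big(\tfrac{1-r}{1-q}\big)^{f}$ for each $f\geq 1$, equivalently that $\psi(x):=(1-x^{f})/(1-x)^{f}=(1+x+\cdots+x^{f-1})/(1-x)^{f-1}$ is increasing on $(0,1)$, which is clear as it is a product of positive increasing functions. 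Ramification only strengthens the bound, since $e_if_i\geq f_i$ and the base exceeds $1$. Squaring the resulting inequality $\zeta_K(1+a)/\zeta_K(1+b)\leq\zeta(1+a)^{n}/\zeta(1+b)^{n}$ closes the chain.

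The main obstacle is conceptual rather than computational: choosing the right intermediate comparison. A direct comparison of $L(f,\cdot)$ with a power of the Riemann zeta function is awkward because the two Euler products are indexed by different sets of primes, so the natural first step produces $\zeta_K$, and the genuine additional work is the purely arithmetic passage from $\zeta_K$ to $\zeta^{n}$, where one must verify that splitting and ramification of primes in $K$ both push the inequality in the favorable direction. Everything else reduces to the two elementary monotonicity facts ($h$ increasing in the Hecke eigenvalue and $\psi$ increasing), together with the single nonelementary input, the Ramanujan--Petersson bound.
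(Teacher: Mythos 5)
Your proof is correct, and it establishes the same chain of comparisons the paper uses --- Ramanujan--Petersson to pass from $L(f,\cdot)$ to (a shift of) $\zeta_K^2$, then the splitting of rational primes to pass from $\zeta_K$ to $\zeta^n$ --- but it implements this multiplicatively, local Euler factor by local Euler factor, whereas the paper works additively with the logarithmic derivative. Concretely, the paper writes $-\tfrac{L'}{L}(f,s)=\sum \Lambda_f(\mathfrak a)N(\mathfrak a)^{-s}$, bounds $\Lambda_f(\mathfrak a)\le 2N(\mathfrak a)^{(k-1)/2}\Lambda_K(\mathfrak a)$ by Ramanujan and $c_K(m)\le n\Lambda(m)$ by counting primes above $p$, and then integrates $-\tfrac{L'}{L}\le -2n\tfrac{\zeta'}{\zeta}(\cdot-\tfrac{k-1}{2})$ from $a$ to $b$; your two monotonicity lemmas (the function $h$ increasing in the Hecke eigenvalue, and $\psi$ increasing on $(0,1)$) are exactly the exponentiated, factor-by-factor versions of those two coefficient bounds. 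The paper's route is shorter and produces the pointwise bound on $-L'/L$ that is the natural object to integrate; yours is slightly longer but makes explicit the positivity of the local factors (hence of the $L$-values being compared), isolates $\zeta_K$ as the genuine intermediate object, and shows transparently that both splitting and ramification push the inequality in the right direction. Both arguments use the same single nonelementary input (the Ramanujan bound of Blasius), and both yield the stated constant $2n$.
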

\begin{proof}
We have 
\begin{equation*}
-\frac{L'}{L}(f, s) =: \sum \frac{\Lambda_{f}(\mathfrak{a})}{N(\mathfrak{a})^{s}} = \sum \frac{c_{f}(m)}{m^s}.
\end{equation*}
Since $f$ is a parallel weight $k$ newform, by the Ramanujan bound \cite{DB} we have $\Lambda_{f}(\mathfrak{a}) \leq 2 N(\mathfrak{a})^{\frac{k-1}{2}} \Lambda_{K}(\mathfrak{a})$, where $\Lambda_{k}(\mathfrak{a})$ is the Von Mangoldt function for the field $K$.  We also know that if $\sum \frac{\Lambda_{K}(\mathfrak{a})}{N(\mathfrak{a})^s} = \sum \frac{c_{K}(m)}{m^s}$, then $c_{K}(m) \leq n \Lambda(m)$, where $\Lambda(m)$ is the usual Von Mangoldt function.  Thus we have
\begin{align*}
-\frac{L'}{L}(f, s) &\leq 2 \sum \frac{N(\mathfrak{a})^{\frac{k-1}{2}} \Lambda_{K}(\mathfrak{a})}{N(\mathfrak{a})^s} = \sum \frac{c_{K}(m)}{m^{s-\frac{k-1}{2}}} \\
& \leq 2n \sum \frac{\Lambda(m)}{m^{s-\frac{k-1}{2}}} = -2n \frac{\zeta'}{\zeta} \left(s - \frac{k-1}{2} \right).
\end{align*}
We then have
\begin{align*}
\frac{L \left(f, \frac{k+1}{2} +a \right)}{L \left( f, \frac{k+1}{2} +b \right)} &= \exp \left( \int_{a}^{b} - \frac{L'}{L} \left(f, \frac{k+1}{2} +t \right) dt \right) \\
& \leq \exp \left( 2n \int_{a}^{b} - \frac{\zeta'}{\zeta}(1+t) dt \right) = \frac{\zeta(1+a)^{2n}}{\zeta(1+b)^{2n}}.
\end{align*}
\end{proof}
We will also use the following theorem for determining whether a polynomial has all of its roots on the unit circle \cite{LS}.
\begin{Thm} \label{roots}
A necessary and sufficient condition for all the zeros of a polynomial $P(z) = \sum_{n=0}^{d} a_{n}z^{n}$ with complex coefficients to lie on the unit circle is that there exists a polynomial $Q(z)$, with all of its zeros inside or on the unit circle, such that
\begin{equation*}
P(z) = z^{m} Q(z) + e^{i \theta} Q^{*}(z),
\end{equation*}
where for a polynomial $g(z)$ of degree $d$, $g^{*}(z) = z^{d} \overline{g}(1/z)$.
\end{Thm}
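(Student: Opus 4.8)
The plan is to prove both implications directly by exploiting the behaviour on the unit circle of the Blaschke-type ratio $z^{m}Q(z)/Q^{*}(z)$, which is the engine behind all such ``self-inversive'' root statements. Throughout I record the elementary fact that on $|z|=1$ one has $\overline{z}=1/z$, so that $Q^{*}(z)=z^{e}\overline{Q}(1/z)=z^{e}\,\overline{Q(z)}$ there, giving $|Q^{*}(z)|=|Q(z)|$.

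For sufficiency, assume the decomposition $P(z)=z^{m}Q(z)+e^{i\theta}Q^{*}(z)$ with all zeros of $Q$ in the closed unit disk. Writing $Q(z)=a_{e}\prod_{j=1}^{e}(z-\rho_{j})$ with $|\rho_{j}|\le 1$, a direct computation gives $Q^{*}(z)=\overline{a_{e}}\prod_{j=1}^{e}(1-\overline{\rho_{j}}z)$, so that
\[
B(z):=\frac{z^{m}Q(z)}{Q^{*}(z)}=\frac{a_{e}}{\overline{a_{e}}}\,z^{m}\prod_{j=1}^{e}\frac{z-\rho_{j}}{1-\overline{\rho_{j}}z}
\]
is, up to a unimodular constant, a finite Blaschke product. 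First I would record the modulus dichotomy $|B(z)|<1$ for $|z|<1$, $|B(z)|=1$ for $|z|=1$, and $|B(z)|>1$ for $|z|>1$, the factors with $|\rho_{j}|=1$ collapsing to unimodular constants after cancellation. Since a zero $z_{0}$ of $P$ with $Q^{*}(z_{0})\ne 0$ satisfies $B(z_{0})=-e^{i\theta}$, a point of modulus one, the dichotomy forces $|z_{0}|=1$; a remaining zero, where $Q^{*}(z_{0})=0$, must also satisfy $Q(z_{0})=0$, and combining $|z_{0}|\le 1$ (it is a zero of $Q$) with $|z_{0}|\ge 1$ (it is a zero of $Q^{*}$) again gives $|z_{0}|=1$.

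For necessity, suppose every zero of $P$ lies on the unit circle and factor $P(z)=a_{d}\prod_{j=1}^{d}(z-\omega_{j})$ with $|\omega_{j}|=1$. I would simply take $m=1$ and $Q(z)=a_{d}\prod_{j=1}^{d-1}(z-\omega_{j})$, a polynomial all of whose zeros lie on, hence in, the closed unit disk. Using $\overline{\omega_{j}}=1/\omega_{j}$ one finds $Q^{*}(z)=C\prod_{j=1}^{d-1}(z-\omega_{j})$ with $|C|=|a_{d}|$, whence
\[
z\,Q(z)+e^{i\theta}Q^{*}(z)=\bigl(a_{d}z+e^{i\theta}C\bigr)\prod_{j=1}^{d-1}(z-\omega_{j}).
\]
Choosing $e^{i\theta}=-a_{d}\omega_{d}/C$ recovers $P$ exactly; the crucial point is that this number has modulus one, which holds \emph{precisely because} $|\omega_{d}|=1$ makes $|a_{d}\omega_{d}|=|a_{d}|=|C|$.

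I expect the main technical obstacle to be the sufficiency direction, specifically the careful treatment of the degenerate Blaschke factors coming from zeros of $Q$ on the unit circle and of possible common zeros of $Q$ and $Q^{*}$: one must verify that these do not spoil the strict modulus inequalities off the circle and that every zero of $P$, counted with multiplicity, is genuinely accounted for. The necessity direction, by contrast, reduces to the single unimodularity verification displayed above and should be routine.
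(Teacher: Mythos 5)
The paper does not prove this statement at all: it is quoted verbatim from Lal\'in--Smyth \cite{LS} and used as a black box, so there is no internal proof to compare against. Judged on its own, your argument is essentially correct and is in the same spirit as the standard (Cohn-type) proof in the literature: sufficiency via the modulus dichotomy for the Blaschke-type ratio $B(z)=z^{m}Q(z)/Q^{*}(z)$ is exactly the classical route, while your necessity construction (peel off one root $\omega_{d}$, take $m=1$ and $Q=P/(z-\omega_{d})$) is a clean alternative to the more common trick of observing that $P$ is self-inversive, $P^{*}=\eta P$ with $|\eta|=1$, and taking $m=0$, $Q=\tfrac{1}{2}\bar\eta^{1/2}P$; both work, and yours has the mild virtue of not needing to first establish self-inversiveness. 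The one loose end you flag in sufficiency is real but easily closed: the strict inequalities $|B(z)|<1$ on $|z|<1$ and $|B(z)|>1$ on $|z|>1$ hold as soon as $m\geq 1$ or at least one zero of $Q$ lies strictly inside the disk, since each factor with $|\rho_{j}|=1$ collapses to the unimodular constant $-\rho_{j}$ and the remaining factors satisfy the strict dichotomy. In the only genuinely degenerate case ($m=0$ and every zero of $Q$ on the circle) one has $Q^{*}=\lambda Q$ with $|\lambda|=1$, hence $P=(1+e^{i\theta}\lambda)Q$, which either vanishes identically (excluded, since the zero polynomial does not have all its zeros on the unit circle) or is a nonzero scalar multiple of $Q$, whose zeros all lie on the circle; so the conclusion survives. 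You should also record explicitly the small observation that a common zero $z_{0}$ of $P$ and $Q^{*}$ cannot be $0$ (because $Q^{*}(0)=\overline{a_{e}}\neq 0$), which is what lets you deduce $Q(z_{0})=0$ from $z_{0}^{m}Q(z_{0})=0$. Your worry about multiplicities is unnecessary: the theorem only asserts that the zero set lies on the circle.
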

In order to use this theorem let $m := \frac{k-2}{2}$ and define the two important polynomials $P_{f}(X)$ and $Q_{f}(X)$ by
\begin{equation}
P_{f}(X) := \frac{1}{2} \binom{2m}{m} \Lambda \left(f, \frac{k}{2} \right) + \sum_{j=1}^{m} \binom{2m}{m+j} \Lambda \left(f, \frac{k}{2} +j \right) X^{j}
\end{equation}
and
\begin{equation}
Q_{f}(X) := \frac{1}{\Lambda(f,2m+1)} P_{f}(X).
\end{equation}
We will be able to apply Theorem~\ref{roots} in our situation as a short calculation shows that
\begin{equation*}
r_{f}(i^{n+2} X) = i^{n(2m+1)} \epsilon(f) \Lambda(f, 2m+1) X^m \left[ Q_{f}(X) + \epsilon(f) Q_{f}\left( \frac{1}{X} \right) \right].
\end{equation*}

\section{Proof of the main results}\label{Proofs}

\subsection{The cases $m=1$ and $m=2$} 
The arguments here for small weights exactly mirror those in \cite{JMOS}.  For this reason we will just sketch out the proofs and refer the reader to \cite{JMOS} for more details.  For weight $k=4$ we have $m=1$ and $P_{f}(X) = \Lambda(f,2) + \Lambda(f,3)X$.  If $\epsilon(f)=-1$, then $\Lambda(f,2)=0$ so we have
\begin{equation*}
P_{f}(X) - P_{f} \left(\frac{1}{X} \right) = \Lambda(f,3) \left(X - \frac{1}{X} \right),
\end{equation*}
which clearly has roots at $X=\pm 1$.  If $\epsilon(f) =1$, then
\begin{align*}
P_{f}(X) + P_{f} \left(\frac{1}{X} \right) &= 2\Lambda(f,2) + \Lambda(f,3) \left( X + \frac{1}{X} \right) = 2 \Lambda(f,2) + 2\Lambda(f,3) \cos(\theta),
\end{align*}
where $X=e^{i \theta}$.  By Lemma \ref{ineq} we know $\Lambda(f,2) < \Lambda(f,3)$ so the equation
\begin{equation*}
\cos(\theta) = - \frac{\Lambda(f,2)}{\Lambda(f,3)}
\end{equation*}
has two solutions with $\theta \in [0, 2 \pi)$.     

For $k=6$, we have $m=2$ so 
\begin{equation*}
P_{f}(X) = 3 \Lambda(f,3) + 4 \Lambda(f,4)X + \Lambda(f,5)X^2.
\end{equation*}
If $\epsilon(f)=-1$, then $\Lambda(f,3)=0$ and we have
\begin{align*}
P_{f}(X) - P_{f} \left(\frac{1}{X} \right) &= 4 \Lambda(f,4) \left(X - \frac{1}{X} \right) + \Lambda(f,5) \left( X^2 - \frac{1}{X^2} \right) \\
&= \left(X - \frac{1}{X} \right) \left[ 4\Lambda(f,4) + \Lambda(f,5) \left( X + \frac{1}{X} \right) \right].
\end{align*}
We clearly have $X = \pm 1$ as two solutions.  By Lemma \ref{ineq} again we have $2 \Lambda(f,4)< \Lambda(f,5)$ so the two solutions to $\cos(\theta) = - \frac{2 \Lambda(f,4)}{\Lambda(f,5)}$ for $\theta \in [0, 2 \pi)$ give two other roots on the unit circle.  If $\epsilon(f)=1$, letting $X= e^{i \theta}$  we have
\begin{equation*}
P_{f}(X) + P_{f} \left(\frac{1}{X} \right) = 6 \Lambda(f,3) + 8\Lambda(f,4) \cos(\theta) + 2 \Lambda(f,5) \cos(2 \theta).
\end{equation*}
We aim to show this has two zeros with $\theta \in [0, \pi)$ and thus four zeros with $\theta \in [0, 2 \pi)$.  Noting
\begin{align*}
\frac{d}{d \theta} \left[P_{f}(e^{i \theta}) + P_{f}(e^{-i \theta}) \right] &= -8 \sin(\theta) \left( \Lambda(f,4) + \Lambda(f, 5) \cos(\theta) \right),
\end{align*}
we have critical points at $0, \pi$, and the solution $\theta_{0} \in [0, \pi)$ to $\cos(\theta) = - \frac{\Lambda(f, 4)}{\Lambda(f,5)}$.  To ensure there are two roots in $[0, \pi)$ we need $P_{f}(e^{i \theta}) + P_{f}(e^{-i \theta})$ to be positive at $\theta =0$ and $\pi$ and negative at $\theta= \theta_{0}$.  We clearly have positivity at $\theta=0$.  Positivity at $\theta = \pi$ is equivalent to
\begin{equation*}
3\Lambda(f, 3) + \Lambda(f,5) > 4 \Lambda(f,4)
\end{equation*}
while negativity at $\theta=\theta_{0}$ is equivalent to
\begin{equation*}
2 \Lambda(f, 4)^2 + \Lambda(f,5)^2 \geq  3 \Lambda(f,3) \Lambda(f,5).
\end{equation*}
By Lemma \ref{ineq} and a result of Waldspurger \cite{W} we know that $\Lambda(f,3), \Lambda(f,4)$, and $\Lambda(f,5)$ are all non-negative.  We can therefore use Lemma 4.1 in \cite{JMOS} as it is used there to prove the necessary inequalities.

\subsection{The case of large weight}
We will now prove Theorem \ref{M} for all but finitely many cases.  We will compare $Q_{f}(X)$ to $X^m$ and use Rouch\'{e}'s Theorem to show $Q_{f}(X)$ has all its zeros inside the unit circle.  Once this is established we apply Theorem \ref{roots} to complete the proof.  On $|X|=1$ we have
\begin{align}
\label{inequality}
\begin{split}
Q_{f}(z) -X^m &=\frac{1}{2} \frac{\Gamma(m+1)^{n-2}}{\Gamma(2m+1)^{n-1}} \left(\frac{(2 \pi)^{n}}{D_{K}} \right)^{m} \frac{L(f, m+1)}{L(f, 2m+1)} \\
&+ \sum_{j=1}^{m-1} \frac{1}{j!} \left( \frac{(2 \pi)^{n}}{D_{K}} \right)^{j} \left(\frac{\Gamma(2m+1-j)}{\Gamma(2m+1)} \right)^{n-1} \frac{L(f, 2m+1-j)}{L(f, 2m+1)}.
\end{split}
\end{align}
We now use Lemma \ref{L}, the fact that $\zeta(1/2)^2 \leq \frac{11}{5}$, and Minkowski's bound
\begin{equation*}
D_{K} \geq \left(\frac{n^n}{n!} \right)^2
\end{equation*}
to obtain
\begin{align*}
\left| Q_{f}(z) - X^m \right| & \leq \frac{1}{2} \frac{\Gamma(m+1)^{n-2}}{\Gamma(2m+1)^{n-1}} \left(\frac{(2 \pi)^{n}}{D_{K}} \right)^{m} \left(\frac{\zeta(1/2)}{\zeta(1/2 +m)} \right)^{2n} \\
&+ \sum_{j=1}^{m-1} \frac{1}{j!} \left( \frac{(2 \pi)^{n}}{D_{K}} \right)^{j} \left(\frac{\Gamma(2m+1-j)}{\Gamma(2m+1)} \right)^{n-1} \left(\frac{\zeta(1/2 + m-j)}{\zeta(1/2 +m)} \right)^{2n} \\ 
& \leq \frac{1}{2} \frac{\Gamma(m+1)^{n-2}}{\Gamma(2m+1)^{n-1}} \left(\frac{(2 \pi)^{n} (n!)^2}{n^{2n}} \right)^{m} \left(\frac{11}{5} \right)^{n} \\
&+ \sum_{j=1}^{m-1} \frac{1}{j!} \left( \frac{(2 \pi)^{n} (n!)^2}{n^{2n}} \right)^{j} \left(\frac{\Gamma(2m+1-j)}{\Gamma(2m+1)} \right)^{n-1} \left(\frac{\zeta(1/2 + m-j)}{\zeta(1/2 +m)} \right)^{2n} \\
&=: T_{n}(m)
\end{align*}
Therefore we need to show that $T_{n}(m) <1$ for $ n \geq 2$ and $m$ big enough.  The numbers $T_{n}(m)$ are decreasing as $n$ increases because each individual term is decreasing.  We will now show that $T_{n}(m)$ is also decreasing in $m$.  Therefore once we have $T_{2}(m_{0}) <1$ for some $m_{0}$, then we automatically have that $T_{n}(m_{0})<1$ for any $n \geq 2$ and $m \geq m_{0}$.  We will do this by showing $T_{n}(m+1) - T_{n}(m) \leq 0$. The term outside the sum in $T_{n}(m+1) - T_{n}(m)$ is
\begin{align*}
&\frac{1}{2} \frac{\Gamma(m+1)^{n-2}}{\Gamma(2m+1)^{n-1}} \left(\frac{(2 \pi)^{n} (n!)^2}{n^{2n}} \right)^{m} \left( \frac{11}{5} \right)^{n} \left[ \frac{(2 \pi)^{n} (n!)^{2}}{2^{n-1} (m+1)(2m+1)^{n-2} n^{2n}} -1 \right],
\end{align*}
which is less than or equal to zero as soon as $ m \geq 4$ for $n=2$ and is true for $m \geq 1$ for any $n \geq 3$.  Each term in the sum looks like 
\begin{align} \label{z}
\begin{split}
&\frac{1}{j!} \left( \frac{(2 \pi)^{n} (n!)^{2}}{n^{2n}} \right)^{j} \left( \frac{\Gamma(2m+1-j)}{\Gamma(2m+1)} \right)^{n-1} \left( \frac{\zeta(1/2 + m -j)}{\zeta(1/2 + m)} \right)^{2n} \\
& \times \left[ \left( \frac{(2m+2-j)(2m+1-j)}{(2m+2)(2m+1)} \right)^{n-1} \left( \frac{\zeta(1/2 + m) \zeta( 3/2 + m -j)}{\zeta(3/2 + m) \zeta(1/2 + m -j)} \right)^{2n} -1 \right].
\end{split}
\end{align}
We can use the facts that 
\begin{equation*}
\frac{1}{\zeta(3/2 + m)}, \frac{\zeta(1/2 + m)}{\zeta(1/2 + m -j)} \leq 1, \quad \zeta(3/2 + m -j)^{2} \leq \frac{8}{5} 2^{j-m} +1
\end{equation*}
to show that each term is less than or equal to zero once
\begin{equation*}
\left( \frac{(2m+2-j)(2m+1-j)}{(2m+2)(2m+1)} \right)^{n-1}  \left( \frac{8}{5} 2^{j-m} +1 \right)^{n} \leq 1.
\end{equation*}
The last term to satisfy this inequality is the $j=1$ term.  This case is equivalent to $\left( \frac{m}{m+1} \right)^{n-1} \left( \frac{16}{5} 2^{-m} + 1 \right)^{n} \leq 1$ which one can check is true once $m \geq 6$ for any $n \geq 2$.  Once we know the inequality is satisfied for $m \geq 6$, we can go back to \eqref{z} and check the remaining values of $m$ directly.  We find that equation \eqref{z} is negative for any $m \geq 1$ for $n \geq 2$.  The last thing to deal with is the fact that $T_{n}(m+1)$ has one extra factor in the sum compared to $T_{n}(m)$.  We will pair this term with the $j=m-1$ terms.  Using similar inequalities as above we must show that
\begin{align*}
&\frac{(2 \pi)^{n} (n!)^2}{m n^{2n}} \left( \frac{m+2}{(2m+2)(2m+1)} \right)^{n-1} \left(\frac{\zeta(1/2 + m)}{\zeta(3/2 +m)} \right)^{2n} \\
&+ \left(\frac{(m+3)(m+2)}{(2m+2)(2m+1)} \right)^{n-1} \left(\frac{\zeta(1/2 +m) \zeta(5/2)}{\zeta(3/2 +m) \zeta(3/2)} \right)^{2n} \leq 1,
\end{align*}
which occurs once $m \geq 3$ for $n = 2$ and $m \geq 2$ for $n \geq 3$.  We have shown that $T_{n}(m)$ is decreasing in both $n$ and $m$ so we just need find an $m_{0}$ such that $T_{2}(m_{0}) <1$.  A computer calculation shows this first occurs for $m=8$.  For higher degrees we can run this calculation again to reduce the number of cases that need to be checked explicitly.  For example $T_{3}(m)<1$ once $m \geq 5$ and $T_{n}(m)<1$ for $m \geq 3$ once $n \geq 5$.  We reduce the number of remaining cases by allowing the discriminant to vary.  For $n=2$, we have the following table that  shows the inequality is satisfied once $m$ is big enough depending on the discriminant.
\begin{center}
\begin{tabular}{ | c | c | c | c | c | c | c | c | c | c | c |   }
\hline
  $D_{K}$ & $5$ & $8$ & $12$ & $13$ & $17$ & $21$ & $24$ & $29$ & $33$ & $\geq 35$ \\ \hline
  $m \geq$ & $7$ & $6$ & $5$ & $5$ & $4$ & $4$ & $4$ & $4$ & $4$ & $3$ \\ \hline
\end{tabular}
\end{center}
Similarly, for $n=3$ the inequality is satisfied for $m \geq 3$ once we have $D_{K} \geq 84$.  The only other case we need to check is $n=4$.  The inequality is true for $m \geq 3$ once we have $D_{K} \geq 209$ and the totally real quartic field with smallest discriminant has discriminant equal to $725$.  The fact that there are not many cases to check explicitly is not too surprising after some reflection; increasing any aspect such as degree of the number field, discriminant, or weight of the form helps the polynomial satisfy the analytic conditions needed to have all its roots on the unit circle.

\subsection{Remaining cases}

We check manually the finitely many cases not covered by the previous subsection. Once we obtained the spaces of modular forms, we check that the roots are on the unit circle by testing the inequality $|Q_f(X)-X^m|<1$ as in Equation (\ref{inequality}). The inequality holds for all but $11$ polynomials associated to forms over quadratic fields. In such cases,  we check that  the trigonometric polynomials $P_f(X)+\epsilon(f)P_f\left(\frac{1}{X}\right)$ with $X=e^{i\theta}$ have the necessary number of roots on the interval $[0, \pi)$  as in \cite{JMOS}.

All the spaces for the quadratic fields are available in Magma. For small enough discriminant of the field and weight of the space, such computations can be done relatively fast on a personal computer. In the quadratic case, checking all the forms with precision of 15 decimal places took 4 hours on a 4 core Intel(R) Core(TM) i7--4720HQ CPU $@$ 2.60GHz  personal computer with 8GB of memory.

\begin{ex}
Let $K=\Q(\sqrt{5})$. For weight $k=8$, we have a unique cusp form $f$ whose period polynomial is 
\begin{align*}
r_f(X) &\approx -0.273825X^6 - 0.371966X^5 - 0.329503X^4 - 0.297572X^3 \\
&- 0.329503X^2 - 0.371966X - 0.273825,
\end{align*}
which we can write as $r_f(X) \approx  -0.273825(X^6+\frac{361}{300}X^4+\frac{361}{300}X^2+1) - 0.371966(X^5+\frac{4}{5}X^3+X)$. We  obtain that  $\Lambda(f, 6)=\frac{25}{6}\Lambda(f,4)$, as computed by Yoshida in \cite{YH}.
\end{ex}

\begin{ex} 
Let $K=\Q(\sqrt{33})$. The cusp subspace $S_8(\Gamma_K)$ has three irreducible Hecke submodules, one of which is one-dimensional. Let $g \in S_8(\Gamma_K)$ be the eigenform corresponding to this submodule. Then the period polynomial is approximately
\begin{equation*}
\begin{aligned}
r_g(X)\approx &-140158.98X^6 - 24794.709X^5 - 2025.1361X^4 
\\
&- 130.74X^3 - 2025.1361X^2 - 24794.709X- 140158.98.
\end{aligned}
\end{equation*}
\end{ex}

 The reason for the small precision in the quadratic case is due to slow computations of Hecke eigenvalues. In the cases of fields with small discriminants $D_K\le 17$ and narrow class number $h_+=1$, we were able to increase the precision by using the same technique described for creating spaces of Hilbert modular forms over cubic fields. However, this involved many tedious tests for finding generators of spaces, since the number of  generators rises quickly with the discriminant. We illustrate an example of weight $22$, which was not reachable using the existing infrastructure.

\begin{ex}
Let $K=\Q(\sqrt{5})$. Consider the eigenform $h \in S_{22}(\Gamma_K)$ with Fourier expansion in Table \ref{5x22}. The first row of the table gives totally positive generators of the first few ideals with $\omega$ a root of the polynomial $x^2-x-1$, the second row the norm of the ideal, and the third row the coefficient corresponding to the given ideal.

\begin{table}[h]
\caption{Fourier expansion of an eigenform $h \in S_{22}(\Gamma_K)$ over the field $K=\Q(\sqrt{5})$}
\label{5x22}
\begin{tabular}{|c|c|c|c|c|c|c|c|}
\hline
 $\mathfrak{n}$& (0) &(1) & $(2)$& $(\omega+2)$  &  $(3)$& $(\omega+3)$ & $(4)$ \\ \hline
$N(\mathfrak{n})$ & 0 &1 &4&5  &  $9$& $11$ & 16 \\ \hline
$a(\mathfrak{n})$&0  &1 & -4111360 &21640950 &-4319930070 &-94724929188 & 12505234538496 \\ \hline
\end{tabular}
\end{table}

The roots of the period polynomial $r_h(X)$, seen in  Figure \ref{5x22roots}, are distributed nearly uniformly on the unit circle.

\begin{figure}
\caption{}
\label{5x22roots}
\begin{center}
\includegraphics[scale=0.6]{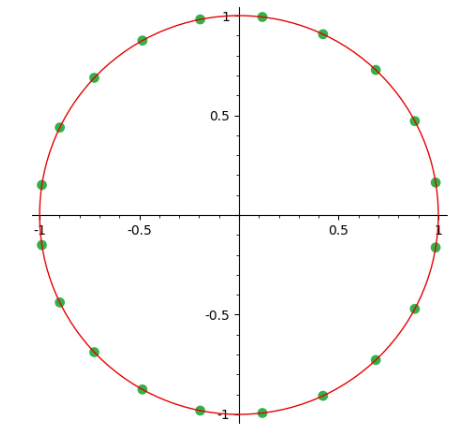} $\quad$
\end{center}
\end{figure}

\end{ex}

In the cubic case, we only need to consider the two totally real fields with discriminants $49$ and $81$, and the only special case is $m=3$. The algorithm we use to reconstruct the spaces of weight $8$ for cubic fields is outlined in Section \ref{cubic}, along with further examples. In the cubic field case, the inequality $|Q_f(X)-X^m|<1$ as in Equation (\ref{inequality}) held for all the polynomials.

\section{Examples and remarks}\label{FinalSection}

\subsection{The case of cubic fields}
\label{cubic}

We create the spaces of Hilbert modular forms using the package \cite{hmf}, which implements Fourier expansions of Hilbert modular forms, and where we can perform operations such as multiplication and applying Hecke operators. The main source of forms in the package are Eisenstein series. In general, one cannot generate full spaces of Hilbert modular forms just using Eisenstein series, but we were able to use Hecke operators on products of Eisenstein series to generate spaces of low weights $2 \le k \le 8$ for the two cubic fields we needed to investigate. In particular, we obtain Fourier expansions of forms that generate spaces of weight $k$ using Algorithm \ref{cubicalg} recursively for parallel even weights starting with $k=2$.

\begin{algorithm}
\caption{Algorithm for reconstructing full spaces of weight $k$ for a cubic field $K$}\label{cubicalg}
\begin{algorithmic}[1]
\Procedure{FullSpaceAndGenerators}{$K$}							
\State Let $d_k \colonequals \dim_\C(M_k(\Gamma)) $  (see  \cite[Addendum (3.14)]{TV}); \Comment{Actual dimension}
\State Let $E_k$ be the Eisenstein series of parallel weight $k$;
\State Compute the set $R$ of forms of weight $k$ obtained from multiplying forms  of lower weights in $\Gen_i$ for $i<k$;
\State $\Gen_k=\{E_k\} \cup R$;
\State Let $M$ be the vector space  generated by $\Gen_k$;
\Repeat                                                                                        \Comment{Keep adding new generators}
 \State Let $g\colonequals T_{\mathfrak{p}}(f)$ for primes $\mathfrak{p}$ of increasing norm; 
\State Let $V$ be the vector space generated by $\Gen_k \cup \{g\}$;
\State If $\dim_\C(V)>\dim_\C(M)$ then $\Gen_k=\Gen_k \cup \{g\}$;							
\State $M = $ the vector space generated by $\Gen_k$
\Until{$\dim_k(M) =d_k$} \Comment{until we have filled the space}
\State \textbf{return} $M, \Gen_k$;
\EndProcedure
\end{algorithmic}
\end{algorithm}

In step (2) of Algorithm \ref{cubicalg}, the dimensions are given by the following Hilbert series from \cite[Addendum (3.14)]{TV}, where the space of weight $k$ corresponds to the coefficient for $t^{k/2}$. For the cubic field with $D_K=49$, we have the series

\[\frac{(1+t^4+3t^5+5t^6+4t^7+3t^8+3t^9+3t^{10}+2t^{11}-2t^{13}+t^{14})}{(1-t)(1-t^2)(1-t^3)(1-t^7)}.\]  For this field, the spaces of weights $k=2,4,6,8$ are generated by Eisenstein series and their products, and we did not need to do the repeat loop in the algorithm.

For the cubic  field with $D_K=81$, the Hilbert series is

\[\frac{(1-t+t^2+t^3+t^4+6t^5+4t^6-2t^7+4t^8+6t^9-t^{10}+3t^{11}+3t^{12}-3t^{13}+t^{14})}{(1-t)^2(1-t^2)(1-t^9)}.\]

Besides Eisenstein series and their products, we need additional generators for weights $k=4,6$ and $8$. For weight $4$ we take $T_2(E_2^2)$, for weight $6$ we take $T_2(E_2^3)$,  and for weight $8$ we take $T_{\mathfrak{p}}(E_2^4)$ and $T_{\mathfrak{q}}(E_2^4)$, where $\p$ and $\mathfrak{q}$ lie above $17$.

 Once we have the full space, we can find the subspace of cusp forms, from which we need to extract a basis of eigenforms by finding matrices of Hecke operators. We use the ideal lying above $7$ for $D_K=49$, and an ideal above $17$ for $D_K=81$. Once we have a basis of eigenforms for each weight, we  construct the $L$-series using the required information described earlier.

\begin{ex} 
Let $K=\Q(\zeta_7+\zeta_7^{-1})$. Take the eigenform $h \in S_8(\Gamma)$ with Fourier expansion in Table \ref{49}, where $\alpha$ is a root of the polynomial $x^2 + \frac{3}{392}x - \frac{1}{21952}$.
Then the period polynomial attached to $h$, where we take the first complex embedding of $\Q(\alpha)$, is 
\[r_h(X) \approx -4.12785iX^4 + 1.29074X^3 + 0.547495iX^2 - 1.29074X- 4.12785i.\]

\begin{table}[h]
\caption{Fourier expansion of  an eigenform $h \in S_8(\Gamma)$ over the field $\Q(\zeta_7+\zeta_7^{-1})$}
\label{49}
\begin{tabular}{|c|c|c|c|c|c|c|c|}
\hline
$N(\mathfrak{n})$ & 0 &1 &7&8  &  13& 13 & 13 \\ \hline
$a(\mathfrak{n})$&$0$  &$1$ &$21952\alpha$ &$-43904\alpha-152 $&$21952\alpha-378$ & $21952\alpha-378$ & $21952\alpha-378$  \\ \hline
\end{tabular}
\end{table}
\end{ex}

\subsection{Numerical stability of the roots of the polynomials}

In this subsection, we perform experiments, first  proposed by Zagier \cite{DZ}, to examine how much leeway such polynomials have to have all the roots on the unit circle. In particular, we decompose $r_f=r_f^++r_f^-$ into the odd and even part, and check thresholds $t>0$ for which $r_f^++t \cdot r_f^-$ still has roots on the unit circle.

Zagier noticed that in the classical case for $f=\Delta$, the interval around $1$ containing $t$ was rather small, roughly $t \in [0.999964, 1.000023]$.   We investigate some classical cases for forms with larger weights and levels, as well as the Hilbert case for various weights and fields with varied discriminants. For the classical case, our experiments are summarized in Table \ref{classt}, where we take newforms with the specified level and weight. We only consider values for $t$ in the interval $[0,2]$, although values for $t$ outside this interval might work as well.

\begin{table}[h]
\caption{Values for $t$ where $r_f(X)^+ + t \cdot r_f^-(X)$ has roots on the unit circle: classical forms}
\label{classt}
\begin{tabular}{|c|c|r|}
\hline
Weight & Level& Interval for $t$ \\ \hline
12 & 1 &  $[0.999964, 1.000023]$ \\ \hline
12 & 5 &  $[0.97877, 1.02507]$ \\ \hline
12 & 7 &  $[0.9298, 1.0558]$ \\ \hline
12 & 11 &  $[0.501, 1.118]$ \\ \hline
18 & 1 &  $[0.999978, 1.000054]$ \\ \hline
18 & 5 &  $[0.9594, 1.015]$ \\ \hline
18 & 7 &  $[0.9313, 1.032]$ \\ \hline
18 & 11 &  $[0.618, 1.077]$ \\ \hline
24 & 1 &  $[0.9999871, 1.0000063]$ \\ \hline
24 & 5 &  $[0.9809, 1.0123]$ \\ \hline
24 & 7 &  $[0.9135, 1.0273]$ \\ \hline
24 & 11 &  $[0.657, 1.066]$ \\ \hline
42 & 1 &  $[0.999985, 1.000013]$ \\ \hline
100 & 1 &  $[0.999989, 1.000006]$ \\ \hline
\end{tabular}
\end{table}

We note a few observations. First, the intervals don't change too much as we vary the weight, but they do get much larger as we increase the level. They also get less centered around $1$ as we increase the level.

\begin{table}[h]
\caption{Values for $t$ where $r_f(X)^+ + t \cdot r_f^-(X)$ has roots on the unit circle, for some Hilbert modular forms}
\label{hilbt}
\begin{tabular}{|c|c|r|}
\hline
Weight & $K$ & Interval for $t$ \\ \hline
8 & $\Q(\sqrt{5})$ &  $[0,1.1158]$ \\ \hline
10 & $\Q(\sqrt{5})$ &  $[0,1.302]$ \\ \hline
12 & $\Q(\sqrt{5})$ &  $[0, 1.519]$ \\ \hline
14 & $\Q(\sqrt{5})$ &  $[0, 1.7283]$ \\ \hline
8 & $\Q(\sqrt{13})$ &  $[0,2]$ \\ \hline
8 & $\Q(\sqrt{33})$ &  $[0,2]$ \\ \hline
8 & $\Q(\zeta_7+\zeta_7^{-1})$ &  $[0,2]$ \\ \hline
\end{tabular}
\end{table}

In Table \ref{hilbt}, we investigate some cases for Hilbert modular forms, as we vary the weight $k$ and the field $K$. We note that in the Hilbert case, increases in weight do increase the interval significantly, as does the increase in the discriminant of the field.

\subsection{Questions for further research}
We conclude with a few remaining topics for future investigations.
\begin{enumerate}
\item Can a full cohomology theory be developed to explain the full context of the period polynomials defined here, for example, in relation to the above cited work of \cite{DL,YH}?
\item Is there a more general RHPP behind polynomials attached to a suitable cohomology theory?
\item Is there a Manin-type theory of these zeta-polynomials, similar to that developed in \cite{ORS}?
\end{enumerate}

\end{document}